\providecommand{\U}[1]{\protect\rule{.1in}{.1in}}
\providecommand{\U}[1]{\protect\rule{.1in}{.1in}}
\providecommand{\U}[1]{\protect\rule{.1in}{.1in}}
\newtheorem{theorem}{Theorem}[section]
\newtheorem{corollary}[theorem]{Corollary}
\newtheorem{proposition}[theorem]{Proposition}
\theoremstyle{definition}
\begin{document}
\title[On the mixed $\left( \ell _{1},\ell _{2}\right) $-Littlewood
inequalities and interpolation ]{On the mixed $\left( \ell _{1},\ell
_{2}\right) $-Littlewood inequalities and interpolation}
\author[M. Maia and J. Santos]{Mariana Maia and Joedson Santos}
\address{Departamento de Matem\'{a}tica, Universidade Federal da Para\'{\i}%
ba, 58.051-900 - Jo\~{a}o Pessoa, Brazil.}
\email{joedsonmat@gmail.com and mariana.britomaia@gmail.com}
\subjclass[2010]{11Y60, 47H60.}
\keywords{ Mixed $\left( \ell _{1},\ell _{2}\right) $-Littlewood inequality}
\thanks{Joedson Santos is supported by CNPq Grant 303122/2015-3}

\begin{abstract}
It is well-known that the optimal constant of the bilinear
Bohnenblust--Hille inequality (i.e., Littlewood's $4/3$ inequality) is
obtained by interpolating the bilinear mixed $\left( \ell _{1},\ell
_{2}\right) $-Littlewood inequalities. We remark that this cannot be
extended to the $3$-linear case and, in the opposite direction, we show that
the asymptotic growth of the constants of the $m$-linear Bohnenblust--Hille
inequality is the same of the constants of the mixed $\left( \ell _{\frac{%
2m+2}{m+2}},\ell _{2}\right) $-Littlewood inequality. This means that,
contrary to what the previous works seem to suggest, interpolation does not
play a crucial role in the search of the exact asymptotic growth of the
constants of the Bohnenblust--Hille inequality. In the final section we use
mixed Littlewood type inequalities to obtain the optimal cotype constants of
certain sequence spaces.
\end{abstract}

\maketitle


\section{Introduction}

The mixed $\left( \ell _{1},\ell _{2}\right) $-Littlewood inequality for
$\mathbb{K}=\mathbb{R}$ or $\mathbb{C}$ asserts that
\begin{equation}
\sum_{j_{1}=1}^{\infty }\left( \sum_{j_{2},...,j_{m}=1}^{\infty }\left\vert
U(e_{j_{1}},...,e_{j_{m}})\right\vert ^{2}\right) ^{\frac{1}{2}}\leq \left(
\sqrt{2}\right) ^{m-1}\left\Vert U\right\Vert ,  \label{u8}
\end{equation}%
for all continuous $m$-linear forms $U:c_{0}\times \cdots \times
c_{0}\rightarrow \mathbb{K}$, where $\left( e_{i}\right) _{i=1}^{\infty }$
denotes the sequence of canonical vectors of $c_{0}$. It is well-known that
arguments of symmetry combined with an inequality due to Minkowski yields
that for each $k\in \{2,...,m\}$ we have%
\begin{equation}
\left( \sum_{j_{1},...,j_{k-1}=1}^{\infty }\left( \sum_{j_{k}=1}^{\infty
}\left( \sum_{j_{k+1},...,j_{m}=1}^{\infty }\left\vert
U(e_{j_{1}},...,e_{j_{m}})\right\vert ^{2}\right) ^{\frac{1}{2}\times
1}\right) ^{\frac{1}{1}\times 2}\right) ^{\frac{1}{2}}\leq \left( \sqrt{2}%
\right) ^{m-1}\left\Vert U\right\Vert ,  \label{0009}
\end{equation}%
which is also called mixed $\left( \ell _{1},\ell _{2}\right) $-Littlewood
inequality. \ For the sake of simplicity we can say that we have $m$
inequalities with \textquotedblleft multiple\textquotedblright\ exponents $%
\left( 1,2,2,...,2\right) ,...,\left( 2,...,2,1\right) $. These inequalities
are in the heart of the proof of the famous Bohnenblust--Hille inequality
for multilinear forms (\cite{bh}) which states that there exists a sequence
of positive scalars $\left( B_{m}^{\mathbb{K}}\right) _{m=1}^{\infty }$ in $%
[1,\infty )$ such that
\begin{equation}
\left( \sum\limits_{i_{1},\ldots ,i_{m}=1}^{\infty }\left\vert
U(e_{i_{^{1}}},\ldots ,e_{i_{m}})\right\vert ^{\frac{2m}{m+1}}\right) ^{%
\frac{m+1}{2m}}\leq B_{m}^{\mathbb{K}}\left\Vert U\right\Vert   \label{ul}
\end{equation}%
for all continuous $m$-linear forms $U:c_{0}\times \cdots \times
c_{0}\rightarrow \mathbb{K}$. This inequality is essentially a
result of the successful theory of nonlinear absolutely summing
operators (for more details on  summing operators see, for instance,
\cite{blasco, popa, rueda} and references therein). To prove the
Bohnenblust--Hille inequality using the mixed $\left( \ell _{1},\ell
_{2}\right) $-Littlewood inequalities it suffices to observe that
the exponent $\frac{2m}{m+1}$ can be seen as a multiple exponent
$\left( \frac{2m}{m+1},...,\frac{2m}{m+1}\right) $ and this exponent
is precisely the interpolation of the exponents $\left(
1,2,2,...,2\right) ,...,\left( 2,...,2,1\right) $ with weights
$\theta _{1}=\cdots =\theta _{m}=1/m$. Mixed Littlewood inequalities
are also crucial to prove Hardy--Littlewood inequalities for
multilinear forms (see \cite{ara, LLL} and the references therein).

\section{Mixed Littlewood inequalities and interpolation}

The optimal constant of the $3$-linear mixed $\left( \ell _{1},\ell
_{2}\right)$-Littlewood inequality for real scalars with multiple exponents $\left(
1,2,2\right) $ and $\left( 2,1,2\right) $ were obtained in \cite{natal,
diana} (these constants are precisely $2$). Curiously, the arguments could
not be extended to obtain the optimal constant associated to the multiple
exponent $\left( 2,2,1\right) .$ However, using the $3$-linear form%
\begin{equation*}
U(x,y,z)=\left( z_{1}+z_{2}\right) \left(
x_{1}y_{1}+x_{1}y_{2}+x_{2}y_{1}-x_{2}y_{2}\right) +\left(
z_{1}-z_{2}\right) \left( x_{3}y_{3}+x_{3}y_{4}+x_{4}y_{3}-x_{4}y_{4}\right)
\end{equation*}%
it is easy to show that the optimal constant associated to the multiple
exponent $\left( 2,2,1\right) $ is not smaller than $\sqrt{2}.$ So,
interpolating the three inequalities we obtain the estimate $2^{1/3}\times
2^{1/3}\times \sqrt{2}^{1/3}$ for the $3$-linear Bohnenblust--Hille
inequality, i.e., $2^{5/6}$, but it is well-known that the optimal constant
of the $3$-linear Bohnenblust--Hille inequality is not bigger than $2^{3/4}.$
So we conclude that the optimal constant of the $3$-linear
Bohnenblust--Hille inequality cannot be obtained by interpolating the
optimal constants of the multiple exponents $\left( 1,2,2\right) ,$ $\left(
2,1,2\right) $ and $\left( 2,2,1\right) .$

In the paper \cite{aa}, Albuquerque \textit{et al.} have shown that the
Bohnenblust--Hille inequality is a very particular case of the following
theorem:

\begin{theorem}
\label{THMBHQ} Let $1\leq k\leq m$ and $n_{1},\ldots ,n_{k}\geq 1$ be
positive integers such that $n_{1}+\cdots +n_{k}=m$, let $q_{1},\dots
,q_{k}\in \lbrack 1,2]$. The following assertions are equivalent:

(A) There is a constant $C_{k,q_{1}...q_{k}}^{\mathbb{K}}\geq 1$ such that%
{\small {\
\begin{equation}
\left( {\sum\limits_{i_{1}=1}^{\infty }}\left( {\sum\limits_{i_{2}=1}^{%
\infty }}\left( ...\left( {\sum\limits_{i_{k-1}=1}^{\infty }}\left( {%
\sum\limits_{i_{k}=1}^{\infty }}\left\vert A\left( e_{i_{1}}^{n_{1}},\ldots
,e_{i_{k}}^{n_{k}}\right) \right\vert ^{q_{k}}\right) ^{\frac{q_{k-1}}{q_{k}}%
}\right) ^{\frac{q_{k-2}}{q_{k-1}}}\cdots \right) ^{\frac{q_{2}}{q_{3}}%
}\right) ^{\frac{q_{1}}{q_{2}}}\right) ^{\frac{1}{q_{1}}}\leq
C_{k,q_{1}...q_{k}}^{\mathbb{K}}\left\Vert A\right\Vert  \label{778}
\end{equation}%
}}for all continuous $m$-linear forms $A:c_{0}\times \cdots \times
c_{0}\rightarrow \mathbb{K}$.

(B) $\frac{1}{q_{1}}+\cdots+\frac{1}{q_{k}}\leq\frac{k+1}{2}.$
\end{theorem}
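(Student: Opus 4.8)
The plan is to prove the two implications separately, with (B)$\Rightarrow$(A) being the substantive direction. For (B)$\Rightarrow$(A), the first step is to reduce to the endpoint case $\tfrac1{q_1}+\cdots+\tfrac1{q_k}=\tfrac{k+1}2$: given admissible exponents with $\sum_t 1/q_t\le\tfrac{k+1}2$, one can pick an endpoint vector $(\bar q_1,\dots,\bar q_k)\in[1,2]^k$ with $\bar q_t\le q_t$ for every $t$ (possible because each $q_t\le2$); since $\ell_{\bar q_t}\hookrightarrow\ell_{q_t}$ contractively and every $\ell_p$-norm is monotone in nonnegative entries, replacing the exponents $q_k,q_{k-1},\dots,q_1$ by $\bar q_k,\bar q_{k-1},\dots,\bar q_1$ one at a time only enlarges the left-hand side of \eqref{778}, so it is enough to prove (A) at the endpoint. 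The second step is to record, for each $s\in\{1,\dots,k\}$, the ``one-$\ell_1$'' blocked Littlewood inequality: the mixed norm of the array $\bigl(A(e_{i_1}^{n_1},\dots,e_{i_k}^{n_k})\bigr)_{i_1,\dots,i_k}$ having exponent $1$ in the $s$-th slot and exponent $2$ in every other slot (in the nesting order of \eqref{778}) is at most $(\sqrt2)^{m-1}\|A\|$. This follows at once from \eqref{u8}--\eqref{0009}: apply the inequality there with the $\ell_1$-factor on the variable that is the first entry of the $s$-th block, and then restrict every summation to the sub-array indexed by the block-diagonal tuples $(j_1,\dots,j_m)=(i_1,\dots,i_1,\dots,i_k,\dots,i_k)$; as all summands are nonnegative, passing to a sub-array does not increase the mixed norm, while this sub-array has exactly $k$ free indices and carries the $\ell_1$-factor on $i_s$.

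The last step for (B)$\Rightarrow$(A) is to interpolate these $k$ inequalities with the weights $\theta_s:=\tfrac2{q_s}-1$. Under the endpoint hypothesis one has $\theta_s\in[0,1]$ and $\sum_s\theta_s=2\sum_s\tfrac1{q_s}-k=1$, so $(\theta_s)_s$ is a legitimate system of interpolation weights; and the resulting exponent in the $t$-th slot is $\bigl(\sum_s\theta_s/p^{(s)}_t\bigr)^{-1}$, where $1/p^{(s)}_t$ equals $1$ if $s=t$ and $\tfrac12$ otherwise, namely $\theta_t+\tfrac{1-\theta_t}2=\tfrac{1+\theta_t}2=\tfrac1{q_t}$, which recovers precisely $(q_1,\dots,q_k)$. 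The interpolation is the elementary iterated Hölder inequality for mixed $\ell_p$-spaces, proved by induction on $k$ using the generalized Hölder inequality at each level, and it produces \eqref{778} with constant $(\sqrt2)^{(m-1)\sum_s\theta_s}=(\sqrt2)^{m-1}$. The whole argument is identical for $\mathbb K=\mathbb R$ and $\mathbb K=\mathbb C$.

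For (A)$\Rightarrow$(B) I would test (A) on one well-chosen family of forms. Fix $n$ and, by the Kahane--Salem--Zygmund inequality, choose a $k$-linear form $C$ on $\ell_\infty^n\times\cdots\times\ell_\infty^n$ with $|C(e_{i_1},\dots,e_{i_k})|=1$ for all $(i_1,\dots,i_k)\in\{1,\dots,n\}^k$ and $\|C\|\le\kappa_k\,n^{(k+1)/2}$ (up to a logarithmic factor, which is harmless below). Collapse the block structure of $C$ into an $m$-linear form $A$ by
\[
A\bigl(x^{(1)},\dots,x^{(m)}\bigr):=C\Bigl(\textstyle\bigodot_{\ell\in B_1}x^{(\ell)},\ \dots,\ \bigodot_{\ell\in B_k}x^{(\ell)}\Bigr),
\]
where $B_t$ is the $t$-th block of $n_t$ consecutive indices and $\bigodot$ denotes the coordinatewise product. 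Then $A$ is $m$-linear (each $x^{(\ell)}$ enters linearly and once), $\|A\|\le\|C\|$ (the coordinatewise product of elements of the unit ball of $\ell_\infty^n$ lies in that ball), and $A(e_{i_1}^{n_1},\dots,e_{i_k}^{n_k})=C(e_{i_1},\dots,e_{i_k})$, which has modulus $1$ on $\{1,\dots,n\}^k$ and vanishes elsewhere. Evaluating the left-hand side of \eqref{778} on this $A$ gives exactly $n^{1/q_1+\cdots+1/q_k}$, so (A) forces $n^{\sum_t 1/q_t}\le C_{k,q_1\dots q_k}^{\mathbb K}\,\kappa_k\,n^{(k+1)/2}$ for all $n$, hence $\tfrac1{q_1}+\cdots+\tfrac1{q_k}\le\tfrac{k+1}2$.

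The one genuinely delicate point is the bookkeeping in (B)$\Rightarrow$(A): one must check that the restriction to the block-diagonal really produces the one-$\ell_1$ inequality in the nesting order of \eqref{778} --- a single block may straddle the $\ell_1$-level and an $\ell_2$-level of the nested sum in \eqref{0009}, so the ``sub-array shrinks the norm'' claim must be verified level by level --- and that the weights $\theta_s=2/q_s-1$ both lie in $[0,1]$ and sum to $1$ exactly when (B) holds with equality; once these are in place, the remaining ingredients are only monotonicity of $\ell_p$-norms, iterated Hölder, and the Kahane--Salem--Zygmund estimate.
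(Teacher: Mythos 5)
Theorem \ref{THMBHQ} is not proved in this paper at all: it is quoted verbatim from Albuquerque \emph{et al.} \cite{aa}, so there is no internal proof to compare your argument against. Your proof is, as far as I can check, correct, and it is essentially the argument of the cited source: for (B)$\Rightarrow$(A) the endpoint reduction is legitimate (mixed norms are non-increasing in each exponent, and endpoint exponents $\bar q_t\le q_t$ in $[1,2]$ with $\sum_t 1/\bar q_t=\frac{k+1}{2}$ exist since $\frac{k+1}{2}\le k$); the block-diagonal restriction of \eqref{0009}, with the $\ell_1$ placed on the first variable of the $s$-th block, really does produce the blocked one-$\ell_1$ inequality with constant $(\sqrt2)^{m-1}$ --- the point you flag goes through because nested $\ell_2$'s over distinct indices coincide with the joint $\ell_2$, and on the block-diagonal support the joint $\ell_2$ over $(j_{k'+1},\dots,j_m)$ collapses exactly to the joint $\ell_2$ over $(i_{s+1},\dots,i_k)$ while non-diagonal outer tuples contribute zero --- and the mixed-sum H\"older interpolation with weights $\theta_s=\frac{2}{q_s}-1$ (nonnegative, summing to $1$ at the endpoint) recovers $(q_1,\dots,q_k)$, giving \eqref{778} with constant $(\sqrt2)^{m-1}$. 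For (A)$\Rightarrow$(B), the Kahane--Salem--Zygmund form together with the coordinatewise-product collapse (idempotence $e_i\odot e_i=e_i$ gives $A(e_{i_1}^{n_1},\dots,e_{i_k}^{n_k})=C(e_{i_1},\dots,e_{i_k})$ and $\|A\|\le\|C\|$) forces $n^{1/q_1+\cdots+1/q_k}\le C^{\mathbb{K}}_{k,q_1\dots q_k}\kappa_k\,n^{(k+1)/2}$ for all $n$, hence (B); a logarithmic factor would indeed be harmless. The only thing a final write-up should add is the explicit level-by-level verification of the block-diagonal step, which you correctly identify as the delicate point.
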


The inequalities (\ref{778}) when $k=m$, $q_{j}=2$ and
$q_{l}=\frac{2m-2}{m}$ for all $l\in\{1,...,j-1,j+1,...,m\}$ can be called mixed $\left( \ell _{\frac{%
2m-2}{m}},\ell _{2}\right) $-Littlewood inequality for short (see \cite%
{natal}). The best constants $C_{\frac{2m}{m+1}...\frac{2m}{m+1}}^{\mathbb{K}%
}$ ($C_{m}^{\mathbb{K}}$ for short) are unknown (even its asymptotic growth
is unknown). We stress that it is even unknown if the sequence $\left(
C_{m}^{\mathbb{K}}\right) _{m=1}^{\infty }$ is increasing. By the Khinchin
inequality it can be proved (see \cite{adv}) that%
\begin{equation}
C_{2,\frac{2m-2}{m},...,\frac{2m-2}{m}}^{\mathbb{K}}\leq A_{\frac{2m-2}{m}%
}^{-1}C_{m-1}^{\mathbb{K}}.  \label{33}
\end{equation}%
where $A_{p}$ are the optimal constants of the Khinchin inequality. Using an
interpolative procedure, or the H\"{o}lder inequality for mixed sums, this
means that%
\begin{equation*}
C_{m}^{\mathbb{K}}\leq A_{\frac{2m-2}{m}}^{-1}C_{m-1}^{\mathbb{K}}.
\end{equation*}

We shall prove the following asymptotic equivalences:%
\begin{equation}
C_{m-1}^{\mathbb{K}}\sim C_{2,\frac{2m-2}{m},...,\frac{2m-2}{m}}^{\mathbb{K}%
}\sim \cdots \sim C_{\frac{2m-2}{m},...,\frac{2m-2}{m},2}^{\mathbb{K}}
\label{800}
\end{equation}%
that seem to have been overlooked until now. This means that the search of
the precise asymptotic growth of the best constants of the
Bohnenblust--Hille inequality is equivalent to the search of the precise
asymptotic growth of, for instance, the sequence $\left( C_{2,\frac{2m-2}{m}%
,...,\frac{2m-2}{m}}^{\mathbb{K}}\right) _{m=1}^{\infty }$ and no
interpolative procedure is needed. As a corollary conclude that the
inequality (\ref{33}) is asymptotically sharp.

The proof of (\ref{800}) is simple. If $T_{m-1}$ is a $\left( m-1\right) $%
-linear form, we define
\begin{equation*}
T_{m}(x^{(1)},...,x^{(m)})=T_{m-1}(x^{(2)},...,x^{(m)})x_{1}^{(1)}.
\end{equation*}%
Then%
\begin{eqnarray*}
&&\left( \sum_{j_{2},...,j_{m}=1}^{\infty }\left\vert T_{m-1}\left(
e_{j_{2},...,}e_{j_{m}}\right) \right\vert ^{\frac{2m-2}{m}}\right) ^{\frac{m%
}{2m-2}} \\
&=&\left( \sum_{j_{1}=1}^{\infty }\left( \sum_{j_{2},...,j_{m}=1}^{\infty
}\left\vert T_{m}\left( e_{j_{1},...,}e_{j_{m}}\right) \right\vert ^{\frac{%
2m-2}{m}}\right) ^{\frac{m}{2m-2}2}\right) ^{\frac{1}{2}} \\
&\leq &C_{2,\frac{2m-2}{m},...,\frac{2m-2}{m}}^{\mathbb{K}}\left\Vert
T_{m}\right\Vert \\
&=&C_{2,\frac{2m-2}{m},...,\frac{2m-2}{m}}^{\mathbb{K}}\left\Vert
T_{m-1}\right\Vert .
\end{eqnarray*}%
We thus conclude that%
\begin{equation*}
C_{m-1}^{\mathbb{K}}\leq C_{2,\frac{2m-2}{m},...,\frac{2m-2}{m}}^{\mathbb{K}%
}.
\end{equation*}%
Therefore%
\begin{equation*}
C_{m-1}^{\mathbb{K}}\leq C_{2,\frac{2m-2}{m},...,\frac{2m-2}{m}}^{\mathbb{K}%
}\leq A_{\frac{2m-2}{m}}^{-1}C_{m-1}^{\mathbb{K}}.
\end{equation*}%
Since (for both real and complex scalars)%
\begin{equation*}
\lim_{m\rightarrow \infty }A_{\frac{2m-2}{m}}^{-1}=1,
\end{equation*}%
we conclude that
\begin{equation*}
C_{m-1}^{\mathbb{K}}\sim C_{2,\frac{2m-2}{m},...,\frac{2m-2}{m}}^{\mathbb{K}%
}.
\end{equation*}%
The other equivalences are similar.

\section{Cotype $2$ constants of $\ell _{p}$ spaces}

Let $2\leq q<\infty $ and $0<s<\infty $. A Banach space $X$ has cotype $q$
(see \cite[page 138]{albiac}) if there is a constant $C_{q,s}>0$ such that,
no matter how we select finitely many vectors $x_{1},\dots ,x_{n}\in X$,%
\begin{equation}
\left( \sum_{k=1}^{n}\Vert x_{k}\Vert ^{q}\right) ^{\frac{1}{q}}\leq
C_{q,s}\left( \int_{[0,1]}\left\Vert \sum_{k=1}^{n}r_{k}(t)x_{k}\right\Vert
^{s}dt\right) ^{1/s},  \label{99}
\end{equation}%
where $r_{k}$ denotes the $k$-th Rademacher function. The smallest of all of
these constants will be denoted by $C_{q,s}(X).$

By the Kahane inequality we know that if (\ref{99}) holds for a certain $s>0$
than it holds for all $s>0.$ It is well-known that for all $p\geq 1$, the
sequence space $\ell _{p}$ has cotype $\max \{p,2\}.$ The optimal values of $%
C_{2,s}(\ell _{p})$ for $1\leq p<2$ are perhaps known or at least folklore,
but we were not able to find in the literature. Classical books like \cite%
{albiac, diestel, garling} do not provide this information.

In this section we shall show how the optimal cotype constant of $\ell _{p}$
spaces can be obtained using mixed inequalities similar to those treated in
the previous section. From now on, $p_{0}$ is the solution of the following
equality%
\begin{equation*}
\Gamma \left( \frac{p_{0}+1}{2}\right) =\frac{\sqrt{\pi }}{2}.
\end{equation*}

\begin{theorem}
Let $1\leq r\leq p_{0}\approx 1.84742.$ Then%
\begin{equation*}
C_{2,r}(\ell _{r})=2^{\frac{1}{r}-\frac{1}{2}}.
\end{equation*}
\end{theorem}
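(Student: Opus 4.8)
The plan is to establish the two inequalities $C_{2,r}(\ell_r) \leq 2^{1/r - 1/2}$ and $C_{2,r}(\ell_r) \geq 2^{1/r-1/2}$ separately. For the upper bound, I would unwind the definition of cotype: given $x_1, \dots, x_n \in \ell_r$, write $x_k = (x_k(i))_{i=1}^\infty$ and expand both sides coordinatewise. The left side is $\bigl(\sum_k \bigl(\sum_i |x_k(i)|^r\bigr)^{\!q/r}\bigr)^{1/q}$ with $q=2$, and the right side, after raising to the $r$-th power and using Fubini, becomes $\bigl(\sum_i \int_0^1 |\sum_k r_k(t) x_k(i)|^r\, dt\bigr)^{1/r}$. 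Each inner integral is controlled by the Khinchin inequality: $\int_0^1 |\sum_k r_k(t) a_k|^r\, dt \geq A_r^r \bigl(\sum_k |a_k|^2\bigr)^{r/2}$, where $A_r$ is the optimal Khinchin constant. The point of the hypothesis $r \leq p_0$ is precisely that in this range $A_r = 2^{1/2 - 1/r}$ (the value of the optimal lower Khinchin constant in the "small $r$" regime, coming from the two-term extremizer), so $A_r^{-1} = 2^{1/r - 1/2}$. Combining with Minkowski's inequality in the form $\bigl\|(\cdot)\bigr\|_{\ell_2(\ell_{r/2})} \leq \bigl\|(\cdot)\bigr\|_{\ell_{r/2}(\ell_2)}$ (valid since $r/2 \leq 1 \leq 2$) to swap the order of summation gives $\bigl(\sum_k \|x_k\|_r^2\bigr)^{1/2} \leq A_r^{-1}\bigl(\int_0^1 \|\sum_k r_k(t)x_k\|_r^r\, dt\bigr)^{1/r}$, which is the desired bound with constant $2^{1/r-1/2}$.

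For the lower bound, I would exhibit an explicit finite family of vectors in $\ell_r$ that saturates the estimate, mimicking the two-term Khinchin extremizer. The natural candidate is $n = 2$ with $x_1 = (1,1)$ and $x_2 = (1,-1)$ viewed in $\ell_r^2 \subset \ell_r$. Then $\|x_1\|_r = \|x_2\|_r = 2^{1/r}$, so the left side of \eqref{99} is $\bigl(2 \cdot 2^{2/r}\bigr)^{1/2} = \sqrt{2}\, 2^{1/r} = 2^{1/r + 1/2}$. For the right side, $r_1(t) x_1 + r_2(t) x_2 = (r_1(t)+r_2(t),\ r_1(t)-r_2(t))$, and since $(r_1,r_2)$ takes the values $(1,1),(1,-1),(-1,1),(-1,-1)$ each with probability $1/4$, the vector $(r_1\pm r_2)$ is always one of $(\pm 2, 0)$ or $(0, \pm 2)$; hence $\|r_1 x_1 + r_2 x_2\|_r = 2$ for every $t$, and the right side equals $2$. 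The ratio is $2^{1/r+1/2}/2 = 2^{1/r - 1/2}$, matching the upper bound.

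**The main obstacle** — or rather the point requiring the most care — is pinning down the exact value of the optimal Khinchin constant $A_r$ in the range $r \leq p_0$ and justifying that $A_r^{-1} = 2^{1/r-1/2}$ there. This is Haagerup's theorem: for $0 < r \leq r_0$ (where $r_0 \approx 1.847$ is the same $p_0$ defined via $\Gamma((p_0+1)/2) = \sqrt\pi/2$), the optimal lower constant in $\|\sum a_k r_k\|_r \geq A_r \|a\|_2$ is attained by the two-point configuration $a = (1,1)$, giving $A_r = \bigl(\frac12\cdot 2^r\bigr)^{1/r} / \sqrt 2 = 2^{1/2-1/r}$; for $r > p_0$ the extremizer instead becomes Gaussian and $A_r = \sqrt{2}\,(\Gamma((r+1)/2)/\sqrt\pi)^{1/r}$, which is why the theorem's hypothesis stops exactly at $p_0$. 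One must cite this (e.g. via \cite{albiac, diestel, garling} or Haagerup's original work) and verify that the threshold matches. A secondary technical point is ensuring the Kahane inequality is \emph{not} needed to change $s$: the theorem fixes $s = r$, so one works directly with the $L^r$ norm of the Rademacher sum and no passage between exponents is required, which keeps the constant sharp rather than merely finite.
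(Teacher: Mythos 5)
Your proof is correct, but it reaches the lower bound by a genuinely different route than the paper. For the upper bound you both rely on the same standard argument (Fubini, the optimal Khinchin constant $A_{r}=2^{\frac{1}{2}-\frac{1}{r}}$ for $r\leq p_{0}$ from Haagerup's theorem, and Minkowski's mixed-norm inequality to swap the order of summation); the paper simply cites this from Albiac--Kalton, pages 141--142, while you sketch it --- your notation $\ell_{2}(\ell_{r/2})\leq\ell_{r/2}(\ell_{2})$ is a slight slip (the inequality you need is $\Vert\cdot\Vert_{\ell_{2}(\ell_{r})}\leq\Vert\cdot\Vert_{\ell_{r}(\ell_{2})}$ for $r\leq 2$, or equivalently $\ell_{1}(\ell_{r/2})\leq\ell_{r/2}(\ell_{1})$ applied to the squared entries), but the step itself is the right one. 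The real divergence is in the sharpness argument: the paper, in keeping with its theme, routes it through mixed Littlewood-type inequalities --- from cotype it deduces that every operator $u:c_{0}\rightarrow\ell_{r}$ is absolutely $(2,1)$-summing with $\pi_{(2,1)}(u)\leq C_{2,r}(\ell_{r})\Vert u\Vert$, applies this to the operator $A_{n,e}(x)=(A(x,e_{k}))_{k}$ induced by a bilinear form, obtains the mixed inequality $\bigl(\sum_{j_{1}}\bigl(\sum_{j_{2}}|A(e_{j_{1}},e_{j_{2}})|^{r}\bigr)^{2/r}\bigr)^{1/2}\leq C_{2,r}(\ell_{r})\Vert A\Vert$, and finally plugs in Littlewood's $2\times 2$ form of norm $2$. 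You instead test the cotype definition directly on the two vectors $x_{1}=(1,1)$, $x_{2}=(1,-1)$, observing that the Rademacher sum has constant norm $2$ while the left side is $2^{\frac{1}{r}+\frac{1}{2}}$. These are the same extremal configuration (your vectors are exactly $A_{n,e}(e_{1})$, $A_{n,e}(e_{2})$ for the paper's Littlewood form), but your computation is more elementary, avoids summing operators entirely, and, since the Rademacher sum is constant, immediately yields $C_{2,s}(\ell_{r})\geq 2^{\frac{1}{r}-\frac{1}{2}}$ for every $s>0$, i.e.\ the paper's subsequent Proposition as well; what it does not exhibit is the connection between cotype constants and mixed Littlewood inequalities, which is the point the paper is making in that section. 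You also correctly identify why the hypothesis $r\leq p_{0}$ is exactly the threshold where the Khinchin extremizer switches from the two-point configuration to the Gaussian, which is what the paper's final Corollary addresses.
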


\begin{proof}
It is not difficult to prove that $C_{2,r}(\ell _{r})\leq 2^{\frac{1}{r}-%
\frac{1}{2}}$ (see \cite[pages 141-142]{albiac}). Now we prove that $2^{%
\frac{1}{r}-\frac{1}{2}}$ is the best constant possible.

Let $A:c_{0}\times c_{0}\rightarrow \mathbb{R}$ a bilinear form and define,
for all positive integers $n$,%
\begin{equation*}
A_{n,e}:c_{0}\rightarrow \ell _{r}
\end{equation*}%
by%
\begin{equation*}
A_{n,e}(x)=\left( A\left( x,e_{k}\right) \right) _{k=1}^{n}.
\end{equation*}%
It is simple to verify that%
\begin{equation*}
\left\Vert A_{n,e}\right\Vert \leq \left\Vert A\right\Vert .
\end{equation*}

In fact,%
\begin{eqnarray*}
\left\Vert A_{n,e}\right\Vert &=&\sup_{\left\Vert x\right\Vert \leq
1}\left\Vert A_{n,e}\left( x\right) \right\Vert =\sup_{\left\Vert
x\right\Vert \leq 1}\left( \sum\limits_{j=1}^{n}\left\vert A\left(
x,e_{j}\right) \right\vert ^{r}\right) ^{1/r} \\
&\leq &\sup_{\left\Vert x\right\Vert \leq 1}\pi _{(r,r)}\left( A\left(
x,\cdot \right) \right) \sup_{\varphi \in B_{\left( c_{0}\right) ^{\ast
}}}\left( \sum\limits_{j=1}^{n}\left\vert \varphi \left( e_{j}\right)
\right\vert ^{r}\right) ^{1/r} \\
&\leq &\sup_{\left\Vert x\right\Vert \leq 1}\left\Vert A\left( x,\cdot
\right) \right\Vert \sup_{\varphi \in B_{\left( c_{0}\right) ^{\ast
}}}\sum\limits_{j=1}^{n}\left\vert \varphi \left( e_{j}\right) \right\vert \\
&=&\left\Vert A\right\Vert .
\end{eqnarray*}

It is also well-known that $A_{n,e}$ is absolutely $\left( 2,1\right) $%
-summing and%
\begin{equation*}
\pi _{(2,1)}\left( A_{n,e}\right) \leq C_{2,r}(\ell _{r})\left\Vert
A_{n,e}\right\Vert .
\end{equation*}%
In fact, for any continuous linear operator $u:c_{0}\rightarrow \ell _{r}$
we have%
\begin{eqnarray*}
\left( \sum_{j=1}^{n}\left\Vert u\left( x_{j}\right) \right\Vert ^{2}\right)
^{\frac{1}{2}} &\leq &C_{2,r}(\ell _{r})\left( \int_{[0,1]}\left\Vert
\sum_{j=1}^{n}r_{j}(t)u\left( x_{j}\right) \right\Vert ^{r}dt\right) ^{\frac{%
1}{r}} \\
&\leq &C_{2,r}(\ell _{r})\sup_{t\in \lbrack 0,1]}\left\Vert
\sum_{j=1}^{n}r_{j}(t)u\left( x_{j}\right) \right\Vert \\
&= &C_{2,r}(\ell _{r})\left\Vert u\right\Vert \sup_{\varphi \in B_{\left(
c_{0}\right) ^{\ast }}}\sum\limits_{j=1}^{n}\left\vert \varphi \left(
x_{j}\right) \right\vert.
\end{eqnarray*}

We have%
\begin{eqnarray}
\left( \sum_{j_{1}=1}^{n}\left( \sum_{j_{2}=1}^{n}\left\vert
A(e_{j_{1}},e_{j_{2}})\right\vert ^{r}\right) ^{\frac{1}{r}\times 2}\right)
^{\frac{1}{2}} &=&\left( \sum_{j_{1}=1}^{n}\left\Vert A_{n,e}\left(
e_{j_{1}}\right) \right\Vert ^{2}\right) ^{\frac{1}{2}}  \label{11} \\
&\leq &C_{2,r}(\ell _{r})\left\Vert A_{n,e}\right\Vert \sup_{\varphi \in
B_{\left( c_{0}\right) ^{\ast }}}\sum\limits_{j=1}^{n}\left\vert \varphi
\left( e_{j}\right) \right\vert  \notag \\
&\leq &C_{2,r}(\ell _{r})\left\Vert A\right\Vert .  \notag
\end{eqnarray}%
But, plugging%
\begin{equation*}
A(x,y)=x_{1}y_{1}+x_{1}y_{2}+x_{2}y_{1}-x_{2}y_{2}
\end{equation*}%
into (\ref{11}) we conclude that%
\begin{equation*}
\left( 2\cdot 2^{\frac{2}{r}}\right) ^{\frac{1}{2}}\leq 2C_{2,r}(\ell _{r})
\end{equation*}%
and thus%
\begin{equation*}
C_{2,r}(\ell _{r})\geq \frac{2^{\frac{1}{2}+\frac{1}{r}}}{2}=2^{\frac{1}{r}-%
\frac{1}{2}}.
\end{equation*}
\end{proof}

\bigskip A simple adaptation of the above proof gives us:

\begin{proposition}
Let $1\leq r\leq 2.$ Then%
\begin{equation*}
C_{2,s}(\ell _{r})\geq 2^{\frac{1}{r}-\frac{1}{2}}
\end{equation*}%
for all $s>0.$
\end{proposition}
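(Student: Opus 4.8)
The plan is to run the argument of the preceding theorem \emph{verbatim}, the only change being that the cotype parameter $r$ inside the Rademacher average is replaced by an arbitrary $s>0$. Thus I would fix a bilinear form $A:c_{0}\times c_{0}\rightarrow \mathbb{R}$, set $A_{n,e}:c_{0}\rightarrow \ell _{r}$, $A_{n,e}(x)=\left( A(x,e_{k})\right) _{k=1}^{n}$, and recall that the estimate $\left\Vert A_{n,e}\right\Vert \leq \left\Vert A\right\Vert $ proved in the theorem involves no $s$ whatsoever.

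The only step that must be re-inspected is the passage
\begin{equation*}
\left( \int_{[0,1]}\Bigl\Vert \sum_{j=1}^{n}r_{j}(t)u(x_{j})\Bigr\Vert ^{s}\,dt\right) ^{1/s}\leq \sup_{t\in [0,1]}\Bigl\Vert \sum_{j=1}^{n}r_{j}(t)u(x_{j})\Bigr\Vert ,
\end{equation*}
which holds for \emph{every} $s>0$ because Lebesgue measure on $[0,1]$ is a probability measure, so the $L^{s}$-norm is dominated by the $L^{\infty }$-norm; moreover $\sup_{t}\Vert \sum_{j}r_{j}(t)u(x_{j})\Vert =\Vert u\Vert \sup_{\varphi \in B_{(c_{0})^{\ast }}}\sum_{j}\vert \varphi (x_{j})\vert $, again with no dependence on $s$. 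Feeding this into the definition of cotype $2$ with parameter $s$ gives $\pi _{(2,1)}(A_{n,e})\leq C_{2,s}(\ell _{r})\left\Vert A_{n,e}\right\Vert $, and from here the computation is word for word the one in the theorem: one reaches
\begin{equation*}
\left( \sum_{j_{1}=1}^{n}\left( \sum_{j_{2}=1}^{n}\left\vert A(e_{j_{1}},e_{j_{2}})\right\vert ^{r}\right) ^{\frac{2}{r}}\right) ^{1/2}\leq C_{2,s}(\ell _{r})\left\Vert A\right\Vert ,
\end{equation*}
and plugging in $A(x,y)=x_{1}y_{1}+x_{1}y_{2}+x_{2}y_{1}-x_{2}y_{2}$ (for which $\left\Vert A\right\Vert =2$ and all four coefficients have modulus $1$) yields $\left( 2\cdot 2^{2/r}\right) ^{1/2}\leq 2\,C_{2,s}(\ell _{r})$, i.e.\ $C_{2,s}(\ell _{r})\geq 2^{1/r-1/2}$.

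I do not expect any genuine obstacle. The hypothesis $1\leq r\leq 2$ is imposed only so that $C_{2,s}(\ell _{r})$ is finite (so that $\ell _{r}$ has cotype $2$), and one should note that here we claim \emph{only} the lower bound, so neither the value of $s$ nor the stronger restriction $r\leq p_{0}$ of the theorem is needed. The one point worth being careful about is that it is precisely the probability-measure estimate above that lets one replace the genuine $L^{s}$-average by the supremum over signs \emph{uniformly in} $s$, so the same bilinear form and the same arithmetic deliver the bound simultaneously for all $s>0$.
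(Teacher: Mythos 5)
Your proposal is correct and is exactly the ``simple adaptation'' the paper has in mind: the only $s$-dependent step is replacing the $L^{s}$ Rademacher average by its supremum over signs, which works for every $s>0$ since $[0,1]$ carries a probability measure, and the rest of the argument (including the test form $A(x,y)=x_{1}y_{1}+x_{1}y_{2}+x_{2}y_{1}-x_{2}y_{2}$ with $\left\Vert A\right\Vert =2$) is unchanged. No issues to report.
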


\bigskip

The same argument of the previous result provides:

\begin{corollary}
\bigskip Let $p_{0}\approx 1.84742<r\leq 2.$ Then%
\begin{equation*}
2^{\frac{1}{r}-\frac{1}{2}}\leq C_{2,r}(\ell _{r})\leq \frac{1}{\sqrt{2}}%
\left( \frac{\Gamma (\frac{r+1}{2})}{\sqrt{\pi }}\right) ^{-1/r}.
\end{equation*}
\end{corollary}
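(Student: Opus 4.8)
The plan is to combine the lower bound already obtained for $C_{2,r}(\ell_r)$ in the preceding theorem with the known upper bound for the cotype constant coming from the optimal constant in the Khinchin inequality. For the lower bound, note that the proof of the theorem above never used the hypothesis $r\le p_0$; the bilinear form $A(x,y)=x_1y_1+x_1y_2+x_2y_1-x_2y_2$ plugged into inequality (\ref{11}) gives $C_{2,r}(\ell_r)\ge 2^{\frac1r-\frac12}$ for every $r$ with $1\le r\le 2$, so in particular for $p_0<r\le 2$. Thus the lower bound in the corollary is immediate from the argument already written down.

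For the upper bound, the plan is to invoke the standard estimate for the cotype-$2$ constant of $\ell_r$ when $1\le r\le 2$ (see \cite[pages 141--142]{albiac}), which proceeds by writing, for $x_1,\dots,x_n\in\ell_r$,
\begin{equation*}
\left(\sum_{k=1}^n\|x_k\|_r^2\right)^{1/2}\le\|(x_k)_k\|_{\ell_2(\ell_r)}\le\|(x_k)_k\|_{\ell_r(\ell_2)}
\end{equation*}
by Minkowski's inequality (since $r\le 2$), and then applying the Khinchin inequality coordinatewise in the $\ell_r$-direction. The Khinchin inequality in $L_r$ with the \emph{optimal} constant, which for $p_0<r\le 2$ equals $A_r=\sqrt2\bigl(\Gamma(\tfrac{r+1}{2})/\sqrt\pi\bigr)^{1/r}$ (Haagerup's value; the regime $r>p_0$ is exactly where this Gaussian-type constant is the sharp one), yields
\begin{equation*}
\|(x_k)_k\|_{\ell_r(\ell_2)}\le A_r^{-1}\left(\int_0^1\Bigl\|\sum_{k=1}^n r_k(t)x_k\Bigr\|_r^r\,dt\right)^{1/r},
\end{equation*}
so $C_{2,r}(\ell_r)\le A_r^{-1}=\frac{1}{\sqrt2}\bigl(\Gamma(\tfrac{r+1}{2})/\sqrt\pi\bigr)^{-1/r}$, which is the claimed upper bound.

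The main point to get right — and the only place where the restriction $p_0<r\le 2$ enters — is identifying the optimal Khinchin constant in this range. For $1\le r\le p_0$ the optimal constant is $2^{1/r-1/2}$ (which is why the previous theorem gives an exact value there), whereas for $p_0<r\le 2$ it transitions to the Haagerup/Gaussian value $\sqrt2\bigl(\Gamma(\tfrac{r+1}{2})/\sqrt\pi\bigr)^{1/r}$; the crossover happens precisely at the root $p_0$ of $\Gamma\bigl(\tfrac{p_0+1}{2}\bigr)=\sqrt\pi/2$, i.e. where the two expressions agree. So the only ``obstacle'' is really a bookkeeping one: making sure the Minkowski step uses $r\le 2$ and the Khinchin step uses the correct sharp constant for $r>p_0$; both are classical. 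The lower and upper bounds are then simply concatenated to give the displayed two-sided estimate, completing the proof.
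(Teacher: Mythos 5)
Your proposal is correct and follows essentially the same route the paper intends: the paper proves this corollary by ``the same argument of the previous result,'' i.e.\ the lower bound comes from plugging the Littlewood bilinear form $A(x,y)=x_1y_1+x_1y_2+x_2y_1-x_2y_2$ into the summing/cotype estimate (which never used $r\le p_0$), and the upper bound is the standard Minkowski--Khinchin cotype argument with the sharp Haagerup constant $A_r=\sqrt2\bigl(\Gamma(\tfrac{r+1}{2})/\sqrt\pi\bigr)^{1/r}$ in the range $p_0<r\le 2$. Your identification of where the hypothesis $r>p_0$ enters (the crossover of the optimal Khinchin constant at $p_0$) is exactly the relevant point.
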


\bigskip \bigskip


\begin{thebibliography}{99}
\bibitem{albiac} F. Albiac, N. Kalton, Topics in Banach Space Theory,
Graduate Texts in Mathematics 233, Springer-Verlag 2005.

\bibitem{aa} N. Albuquerque, G. Ara\'{u}jo, D. Nu\~{n}ez-Alarc\'{o}n, D.
Pellegrino and P. Rueda, Bohnenblust-Hille and Hardy-Littlewood inequalities
by blocks, arXiv:1409.6769 [math.FA].

\bibitem{ara} G. Araujo, D. Pellegrino, D. Diniz P. da Silva e Silva, On the
upper bounds for the constants of the Hardy-Littlewood inequality.
J. Funct. Anal. \textbf{267} (2014), no. 6, 1878--1888.

\bibitem{adv} F. Bayart, D. Pellegrino, and J. B. Seoane-Sep\'{u}lveda, The
Bohr radius of the $n$-dimensional polydisk is equivalent to $\sqrt{(\log
n)/n}$, Adv. Math., \textbf{264} (2014), 726--746.

\bibitem{blasco} O. Blasco, G. Botelho, D. Pellegrino, P. Rueda, Summability
of multilinear mappings: Littlewood, Orlicz and beyond. Monatsh. Math.
\textbf{163} (2011), no. 2, 131--147.

\bibitem{bh} H. F. Bohnenblust and E. Hille, On the absolute convergence of
Dirichlet series, Ann. of Math. \textbf{32} (1931), 600--622.

\bibitem{diestel} J. Diestel, H. Jarchow, A. Tonge, Absolutely summing
operators, Cambridge University Press, 1995.

\bibitem{garling} D.J.H. Garling, Inequalities: A Journey into Linear
Analysis, Cambridge University Press, Cambridge, 2007.

\bibitem{LLL} J. E. Littlewood, On bounded bilinear forms in an infinite
number of variables, Quart. J. Math., \textbf{1} (1930), 164--174.

\bibitem{natal} D. Pellegrino, The optimal constants of the mixed $\left(
\ell _{1},\ell _{2}\right) $-Littlewood inequality, J. Number Theory,
\textbf{160} (2016) 11--18.

\bibitem{diana} D. Pellegrino, D. Serrano-Rodr\'{\i}guez, On the mixed $%
\left( \ell _{1},\ell _{2}\right) $-Littlewood inequality for real scalar
and applications, arXiv:1510.00909v1 [math.FA].

\bibitem{popa} D. Popa, G. Sinnamon, Blei's inequality and coordinatewise
multiple summing operators. Publ. Mat. \textbf{57} (2013), no. 2, 455--475.

\bibitem{rueda} P. Rueda, E.A. S\'{a}nchez-P\'{e}rez, Factorization of $p$%
-dominated polynomials through $L_{p}$-spaces. Michigan Math. J. \textbf{63}
(2014), no. 2, 345--353.
\end{thebibliography}
\end{document}